\title{Mahavier Products, Idempotent Relations, and Condition $\Gamma$}
\author{Steven Clontz and Scott Varagona}
\newcommand{\term}{\textit}
\newcommand{\vect}{\mathbf}
\newcommand{\tuple}[1]{\left\langle{#1}\right\rangle}
\newcommand{\genInvLim}[1]{\varprojlim\tuple{#1}}
\newcommand{\maProd}[1]{{\mathbf{M}}\tuple{#1}}
\newcommand{\rest}{\upharpoonright}
      \theoremstyle{plain}
      \newtheorem{theorem}{Theorem}
      \newtheorem{lemma}[theorem]{Lemma}
      \newtheorem{corollary}[theorem]{Corollary}
      \newtheorem{proposition}[theorem]{Proposition}
      \newtheorem{observation}[theorem]{Observation}
      \theoremstyle{definition}
      \newtheorem{definition}[theorem]{Definition}
      \theoremstyle{remark}
\begin{document}
\maketitle

\begin{abstract}
Clearly, a generalized inverse limit of metrizable spaces 
indexed by \(\mathbb N\) is metrizable,
as it is a subspace of a countable product of metrizable spaces.
The authors previously showed that all idempotent, upper semi-continuous, surjective,
continuum-valued bonding functions on \([0,1]\) (besides the identity) satisfy a certain Condition \(\Gamma\);
it follows that only in trivial cases can a generalized inverse limit of
copies of \([0,1]\) indexed by an uncountable ordinal be metrizable. 
The authors show that Condition \(\Gamma\)
is in fact guaranteed by much weaker criteria, proving a more general metrizability theorem for certain Mahavier Products.
\end{abstract}

\section{Introduction}

In the spirit of the celebrated work by Mahavier \cite{Mahavier} and Ingram \& Mahavier \cite{IngMah1}, 
who first generalized traditional inverse limits to those with set-valued bonding 
functions, researchers have sought more ways to generalize inverse limits. One 
route proposed in \cite{IngMahbook} would be to index the factor spaces of an inverse limit not 
necessarily by the natural numbers, but rather, by some other directed set. 
However, a poorly-behaved directed set can cause an inverse limit to be empty \cite{IngMahbook};
therefore, most research in this area has involved inverse limits with totally  
ordered index sets.

Various results in recent years have shown that investigating totally 
ordered index sets besides the natural numbers is fertile ground for new work. 
Ingram and Mahavier laid the foundation in \cite{IngMahbook} by proving some basic connectedness 
theorems. Notably, Patrick Vernon in \cite{vernon} studied inverse limits on $[0,1]$ with 
set-valued functions indexed by the integers, and he showed that such an inverse 
limit with a single bonding function could be homeomorphic to a 2-cell---a 
striking result, considering Van Nall had shown in \cite{vannall} that this could never 
happen for an inverse limit indexed by the natural numbers.
Later, the notion of a generalized inverse limit was further generalized to the notion of a Mahavier product, which only required the index set to be a preordered set. Thus, the recent interest in Mahavier products (e.g., Greenwood \& Kennedy \cite{GreenKen} and Charatonik \& Roe \cite{charroe}) has helped bring ``alternate'' index sets further into the mainstream.
In particular, in \cite{charroe}, Charatonik and Roe proved theorems about Mahavier products indexed by arbitrary totally ordered sets, and in the process introduced some helpful terminology in the study of generalized inverse limits. Therefore, given this context, it is natural to consider generalized inverse limits (or Mahavier products) indexed by ordinals.

In \cite{CLONTZVARAGONA} the authors studied the special case of a generalized inverse limit 
of copies of \([0,1]\) with a 
single continuum-valued upper semi-continuous idempotent bonding function. 
They proved that when \(f\) is a surjection besides the identity, 
the graph of \(f\) must contain two distinct points 
\(\tuple{x,x},\tuple{y,y}\) on the diagonal, in addition
to a third point \(\tuple{x,y}\). This Condition \(\Gamma\)
is sufficient to guarantee that whenever
the inverse limit is indexed by an ordinal \(\alpha\),
the inverse limit contains a copy of \(\alpha+1\). It 
therefore follows that such an inverse limit is a metric continuum if and only if 
\(\alpha\) is countable. The authors suspected, however, that this was merely
a special case of a more general trend. To this end, it will be shown
that to guarantee Condition \(\Gamma\), \([0,1]\) may be replaced with any
weakly countably compact space, \(f\) need not be continuum-valued,
and the surjectivity of \(f\) may be replaced with a weaker assumption
to prevent trivialities. Applying this result, we also prove metrizability theorems for certain Mahavier products indexed by ordinals.

\section{Definitions and Conventions}

Except when otherwise stated,
we assume that all topological spaces are Hausdorff. By convention, all natural numbers are ordinals, e.g. \(4=\{0,1,2,3\}\).
Let \(B^A\) denote the set of functions from \(A\) to \(B\);
in particular, \(X^2\) is the usual square of ordered pairs 
\(\{\tuple{x,x}:x\in X\}\) (each pair is a function from \(\{0,1\}\) to \(X\))
and \(2^X\) is the Cantor set of functions from \(X\) to \(\{0,1\}\).
Let \(F(X)\) denote the non-empty closed subsets of \(X\).

Given a relation \(R\subseteq X\times Y\), let 
\(R(x)=\{y:xRy\}=\{y:\tuple{x,y}\in R\}\); we often will use letters
\(f,g\) to define relations and treat them as set-valued functions in this way.
Given relations or set-valued functions \(f,g\) and a set \(A\), let
\(f(A)=\{f(x):x\in A\}\) and
\(
  (f\circ g)(x)=f(g(x))=\bigcup_{y\in g(x)}f(y)
=
  \{z:\exists y(z\in f(y),y\in g(x))\}
\).
If \(f:X\to F(X)\) or \(f\subseteq X^2\), let \(f^2=f\circ f\).

\begin{definition}
Suppose \(\tuple{P,\leq}\) is a directed set (\(\leq\) is transitive and reflexive, and each pair of points shares a common upper bound)
and for each \(p \in P\), \(X_p\) is a space. 
Let \(\Pi=\prod_{p\in P}X_p\); we will use boldface letters such as 
\(\vect{x}\in\Pi\) to denote sequences in \(\Pi\).
Suppose further that for each \(p \leq q \in P\), there is a set-valued 
\term{bonding function}
\(f_{p,q} : X_q \to F(X_p)\) 
such that for \(p\leq q\leq r\),
\(
  f_{p,r}=f_{p,q}\circ f_{q,r}
\) 
with \(f_{p,p}:X_p\to F(X_p)\) 
defined by \(f_{p,p}(x)=\{x\}\).
Then the \term{generalized inverse limit} 
\(\genInvLim{X_p, f_{p,q}, P}\subseteq\Pi\) is given by:

\[
  \genInvLim{X_p, f_{p,q}, P} 
= 
  \{
    \textbf{x}\in \Pi 
  : 
    p \leq q \Rightarrow \vect{x}(p) \in f_{p,q}(\vect{x}(q)) 
  \}
.\]
\end{definition}

The preceding definition is based upon the one given in \cite{IngMahbook}.
Much of the literature assumes that \(P\) is a total order,
often simply \(\omega=\mathbb{N}=\{0,1,2,\dots\}\), but
allowing for other orders such as \(\mathbb{Z}\) enables
the construction of many interesting examples unattainable with
simply \(\omega\) \cite{vernon}.

The study of generalized inverse limits typically focuses on
upper semi-continuous bonding functions \(f_{p,q}:X_q\to F(X_p)\), 
which for compact
spaces may be characterized as those that map points to non-empty sets
and whose graphs are closed in \(X_q\times X_p\). 
In fact, it will be convenient to simply consider
this graph itself as a relation. This is
exactly the approach taken with Mahavier products
in \cite{charroe}, \cite{GreenKen}.

\begin{definition}
Take the assumptions of the previous definition,
but let \(\tuple{P,\leq}\) be a preordered set (\(\leq\) is transitive and reflexive) and let \(f_{p,q}\subseteq X_q\times X_p\)
such that \(f_{p,r}\subseteq f_{p,q}\circ f_{q,r}\).
Then the \term{Mahavier product} 
\(\maProd{X_p, f_{p,q}, P}\subseteq\Pi\) is given by:

\[
  \maProd{X_p, f_{p,q}, P} 
= 
  \{
    \textbf{x}\in \Pi 
  : 
    p \leq q \Rightarrow \vect{x}(p) \in f_{p,q}(\vect{x}(q)) 
  \}
\]
\end{definition}

Note that the condition 
\(f_{p,r}\subseteq f_{p,q} \circ f_{q,r}\)
has been weakened from equality,
\(f_{p,q}\) is an arbitrary relation from
\(X_q\) to \(X_p\),
and the directed set is now only preordered,
as is done in \cite{charroe}.

\begin{definition}
A Mahavier product \(\maProd{X_p,f_{p,q},P}\) is \term{exact}
whenever \(f_{p,r}=f_{p,q}\circ f_{q,r}\) for all \(p\leq q\leq r\).
\end{definition}

\begin{definition}
A relation \(f\subseteq X\times Y\) is said to be \term{serial}
(also called \term{full} \cite{charroe} or \term{left-total})
if  \(\forall x \in X\) \(\exists y \in Y\) such that 
\(\tuple{x,y}\in f\); that is, \(f(x)\not=\emptyset\)
for all \(x\in X\).
\end{definition}

Note that we will refrain from using the term ``full'' to describe such 
relations, as ``full'' bonding functions are defined in another sense in 
\cite{GreenKen2}.

\begin{observation}
Exact Mahavier products bonded by 
closed-valued serial relations
indexed by a directed set
are generalized inverse limits.
\end{observation}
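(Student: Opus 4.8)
The plan is to verify that, under the stated hypotheses, the triple $\tuple{X_p,f_{p,q},P}$ already satisfies every requirement appearing in the definition of a generalized inverse limit, so that $\maProd{X_p,f_{p,q},P}$ and $\genInvLim{X_p,f_{p,q},P}$ are not merely homeomorphic but literally the same subset of $\Pi$.

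First I would translate the relational hypotheses into the set-valued-function language of the first definition. To say a relation $f_{p,q}\subseteq X_q\times X_p$ is serial is precisely to say $f_{p,q}(x)\neq\emptyset$ for every $x\in X_q$, and to say it is closed-valued is precisely to say each $f_{p,q}(x)$ is closed in $X_p$; together these say $f_{p,q}(x)\in F(X_p)$ for all $x$, i.e.\ $f_{p,q}$ is (the graph of) a bonding function $X_q\to F(X_p)$. I would also record that the two relevant notions of composition coincide---the relational composite of $f_{p,q}$ and $f_{q,r}$ is the same object as the composite $(f_{p,q}\circ f_{q,r})(x)=\bigcup_{y\in f_{q,r}(x)}f_{p,q}(y)$ of the associated set-valued functions, exactly as spelled out in the conventions---so ``$f_{p,r}=f_{p,q}\circ f_{q,r}$'' carries the same meaning in both definitions. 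It is worth noting here that exactness is precisely what guarantees this common value $f_{p,r}$ is again $F(X_p)$-valued, sidestepping the fact that an arbitrary union of closed sets need not be closed.

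It then remains to match the remaining clauses. Exactness is, verbatim, the condition $f_{p,r}=f_{p,q}\circ f_{q,r}$ for all $p\leq q\leq r$ that a generalized inverse limit demands; the diagonal relations are the identities $f_{p,p}(x)=\{x\}$ carried over from the first definition; and the index set is directed by hypothesis. Since the membership condition ``$p\leq q\Rightarrow\vect{x}(p)\in f_{p,q}(\vect{x}(q))$'' defining the Mahavier product is then word for word the one defining the generalized inverse limit, the two subsets of $\Pi$ coincide. I expect no substantive obstacle: the content is entirely the bookkeeping that ``closed-valued and serial'' repackages ``$F(X_p)$-valued'' and that exactness repackages the composition axiom. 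The one thing to watch is that both ``indexed by a directed set'' and the convention $f_{p,p}(x)=\{x\}$ are genuinely used---a Mahavier product over a merely preordered index set, or one whose diagonal relations fail to be identities, need not literally be a generalized inverse limit---so the argument should invoke these hypotheses explicitly rather than let them slip by.
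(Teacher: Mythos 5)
Your proposal is correct; the paper states this as an observation with no proof, treating it as immediate from the definitions, and your definitional unwinding (serial plus closed-valued repackages $F(X_p)$-valued, exactness repackages the composition axiom, and the defining membership conditions coincide verbatim) is exactly the intended argument. Your care about the directedness hypothesis and the carried-over convention $f_{p,p}(x)=\{x\}$ is appropriate but does not change the substance.
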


\begin{definition}
A serial relation that is a closed subset of \(X \times Y\) 
is called \term{USC}.
\end{definition}

Here USC stands for upper semi-continuous, as
in the case that \(X,Y\) are compact, this property may be characterized
as follows: for every \(x\in X\) and open set \(V\supseteq f(x)\),
there exists an open neighborhood \(U\) of \(x\) such that
\(f(u)\subseteq V\) for all \(u\in U\) \cite{IngMah1}.
Put another way, the set-valued function \(f:X\to F(Y)\) is continuous
where \(F(Y)\) is given the upper Vietoris topology.

\begin{definition}
A relation \(f\subseteq X\times Y\) is \term{surjective} if, 
for all \(y\in Y\), there exists \(x \in X\) such that 
\(\tuple{x,y}\in f\).
\end{definition}

\begin{definition}
An \term{idempotent} relation on \(f\subseteq X^2\) 
is one that satisfies \(f^2=f\).
\end{definition}

Note that transitivity may be characterized by
\(f^2\subseteq f\), so all idempotent relations are transitive.
Put another way, an idempotent relation \(R\subseteq X^2\)
satisfies \(xRz\) if and only if \(xRyRz\) for some \(y\in X\);
transitivity is the backwards implication.
Assuming reflexivity, idempotence and transitivity are equivalent:
let \(y=z\), so \(xRz\Leftrightarrow xRzRz \Leftrightarrow xRyRz\).
Note also that the inverse of an idempotent relation is idempotent.

The usual strict linear order \(<\) on \(\mathbb{Z}\) is
an example of a transitive yet non-idempotent relation.
Likewise, the strict linear order on any dense subset of \(\mathbb R\)
is a non-reflexive idempotent relation.

Idempotent relations are of significant interest
when studying exact Mahavier products of many copies of the same topological space.

\begin{observation}
If \(X_p=X\) and \(f_{p,q}=f\) for all \(p<q\),
and there exist \(p,q,r\in P\) such that \(p<q<r\), 
then the bonding relation \(f\) 
in an exact Mahavier product \(\maProd{X,f,P}\) must be idempotent. 
\end{observation}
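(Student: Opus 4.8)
The plan is to read off idempotence of $f$ directly from the exactness hypothesis, evaluated at the witnessing triple $p<q<r$. Since $P$ is preordered and $p\le q\le r$, the defining property of an \emph{exact} Mahavier product applies to this triple and gives $f_{p,r}=f_{p,q}\circ f_{q,r}$.

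Next I would substitute the standing assumption that the bonding relations are constant. From $p<q$ we have $f_{p,q}=f$; from $q<r$ we have $f_{q,r}=f$; and from $p<r$ (which holds because $p<q<r$) we have $f_{p,r}=f$. Plugging these three equalities into the displayed identity yields $f=f\circ f$, i.e. $f^2=f$, which is precisely the statement that $f$ is idempotent. Once $f^2=f$ has been obtained from a single admissible triple, it is a property of the relation $f$ itself, so the conclusion follows.

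The only point demanding a moment's attention is that all three pairs $(p,q)$, $(q,r)$, $(p,r)$ are covered by the "constant $f$" stipulation — that is, that $p<r$ — which is immediate from transitivity of the strict order, and is in any event built into the notation $\maProd{X,f,P}$, where a single relation $f$ plays the role of $f_{a,b}$ for every $a<b$. No further hypotheses (reflexivity, seriality, closedness) are used. In short, there is no substantive obstacle here: the observation is an unwinding of the definition of exactness, recorded to motivate the role that idempotent relations will play in what follows.
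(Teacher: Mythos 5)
Your proof is correct and is exactly the intended justification: the paper records this as an observation without proof, and the argument is precisely the unwinding of exactness at a single strict triple $p<q<r$ to obtain $f=f\circ f$. Your side remark that $p<r$ (so that $f_{p,r}=f$ rather than the identity $f_{p,p}$) is the one point worth checking, and you handle it correctly.
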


Note that \(f_{p,p}\not=f\) unless \(f\) is the identity, but for
simplicity we still simply write \(\maProd{X,f,P}\).

\begin{definition}
For convenience, we call a USC idempotent surjective relation 
\(f \subseteq X^2\) a \term{V-relation}. 
\end{definition}

\begin{figure}
\begin{center}
\begin{tikzpicture}
\begin{scope}[shift={(0,0)}]
  \draw[gray] (0,0) rectangle (2,2);
  \draw[thick,blue] (0,0) -- (2,2);
\end{scope}
\begin{scope}[shift={(3,0)}]
  \draw[gray] (0,0) rectangle (2,2);
  \draw[thick,blue] (0,2) -- (0,0) -- (2,2);
  \draw[black,fill=red] (0,0) circle (0.1);
  \draw[black,fill=red] (0,1) circle (0.1);
  \draw[black,fill=red] (1,1) circle (0.1);
\end{scope}
\begin{scope}[shift={(6,0)}]
  \draw[gray] (0,0) rectangle (2,2);
  \draw[thick,blue] (0,0) -- (0.5,0.5) -- (0.5,2) -- (2,2) -- (1.5,1.5)
    -- (1.5,2); 
  \draw[black,fill=red] (0.5,0.5) circle (0.1);
  \draw[black,fill=red] (0.5,1.5) circle (0.1);
  \draw[black,fill=red] (1.5,1.5) circle (0.1);
\end{scope}
\begin{scope}[shift={(9,0)}]
  \draw[gray] (0,0) rectangle (2,2);
  \draw[thick,blue] (0,0) -- (1,1) -- (2,1);
  \draw[thick,blue,fill] (1.5,1) -- (1.5,2) -- (2,2) -- (2,1);
  \draw[black,fill=red] (1.5,1.5) circle (0.1);
  \draw[black,fill=red] (2,1.5) circle (0.1);
  \draw[black,fill=red] (2,2) circle (0.1);
\end{scope}
\end{tikzpicture}
\end{center}
\caption{Illustrating Condition \(\Gamma\) for V-relations on \([0,1]\)}
\label{VRelation}
\end{figure}
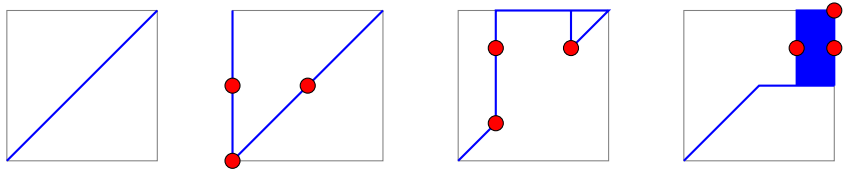

Examples of V-relations are given in Figure \ref{VRelation}
(the designated points illustrate the definition of Condition \(\Gamma\),
given in Section \ref{conditionGamma}).
The next section will outline how to construct and identify
V-relations.

\section{Constructing V-relations}

The following propositions not only give necessary and/or sufficient conditions 
for a relation \(f\) to be a V-relation, but also give the reader a few tools for 
constructing simple V-relations from scratch. The first proposition is a
useful recharacterization of idempotence.

\begin{proposition}\label{Vrelcondition}
Let $X$ be a space and let $f \subseteq X^2$ be USC and surjective. Then $f$ is a V-relation iff whenever $A$ is the image of some $x \in X$, then $f(A)=A$.
\end{proposition}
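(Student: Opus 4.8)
The plan is to prove both directions by unpacking the definition of idempotence, $f^2 = f$, in terms of the images $f(x)$. The key observation is that for a serial (hence everywhere-defined) relation, $f^2 = f$ as sets of ordered pairs is equivalent to the pointwise statement $f(f(x)) = f(x)$ for all $x \in X$, using that $(f \circ f)(x) = \bigcup_{y \in f(x)} f(y) = f(f(x))$ by the conventions set up earlier. So the proposition really asks: is $f(f(x)) = f(x)$ for all $x$ equivalent to ``$A = f(x)$ for some $x$ implies $f(A) = A$''? Written this way, the forward direction is almost immediate.

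First I would prove the forward implication ($f$ a V-relation $\Rightarrow$ the image condition). Suppose $A = f(x)$ for some $x \in X$. By idempotence, $f(A) = f(f(x)) = (f\circ f)(x) = f^2(x) = f(x) = A$, so there is essentially nothing to do here beyond citing the definitions; surjectivity is not even needed for this direction.

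For the converse, assume $f$ is USC and surjective, and that $f(A) = A$ whenever $A$ is the image of some point. I must show $f^2 = f$, equivalently $f^2(x) = f(x)$ for every $x$. Fix $x \in X$ and set $A = f(x)$; this is the image of the point $x$, so by hypothesis $f(A) = A$. But $f(A) = f(f(x)) = f^2(x)$, so $f^2(x) = A = f(x)$, as needed. Running this for all $x$ gives $f^2 = f$, i.e. $f$ is idempotent; since $f$ is already USC and surjective by assumption, it is a V-relation.

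The main (and honestly only) subtlety to get right is bookkeeping about which hypotheses are actually in force: the proposition fixes ``USC and surjective'' on both sides of the biconditional, so those are free, and the genuine content is purely the equivalence between the set-equation $f^2=f$ and the family of pointwise equations $f(f(x))=f(x)$ — which is just the unwinding of $(f\circ g)(x)=\bigcup_{y\in g(x)}f(y)$ and the fact that a relation is determined by its images. One should also note that seriality guarantees $f(x)\neq\emptyset$ so that ``$A$ is the image of some $x$'' is a nonvacuous condition, though this plays no essential role in the argument. No compactness or Hausdorffness is needed; the proof is formal.
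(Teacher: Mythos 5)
Your argument is correct and is exactly the intended one: since USC and surjective are fixed hypotheses, the claim reduces to the equivalence of $f^2=f$ with $f(f(x))=f(x)$ for all $x$, which is immediate from $(f\circ f)(x)=\bigcup_{y\in f(x)}f(y)$ and the fact that a relation is determined by its images. The paper treats this as a definitional unwinding and gives no proof at all, so your write-up supplies precisely what was omitted.
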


As an obvious consequence of Proposition \ref{Vrelcondition}, if $f \subseteq X^2$ 
is a V-relation and for some $x \in X$, $f(x)=\{y\}$, then $f(y)=\{y\}$.

\begin{proposition} \label{Vrelcondition2}
Let $f \subseteq [0,1]^2$ be USC. If $f$ satisfies at least one of the following conditions, then $f$ is a V-relation.

\begin{enumerate}
\item For each $x \in [0,1]$, $f(x)=\{x, 1-x\}$ .

\item The relation $f$ is surjective, and for each $x \in [0,1]$, $f(x)=[0,x]$ 
or $f(x) \subseteq \{0, x\}$.

\item
The relation $f$ is surjective, and for each $x \in [0,1]$, $f(x)=[x,1]$ or 
$f(x) \subseteq \{x,1\}$.

\item
For some non-empty $A, B \subseteq [0,1]$ with $A \cap B = \emptyset$, we have 
$f(a)=[0,1]$ for each $a \in A$ and $f(x)=B$ for each $x \in [0,1] \setminus A$.

\item  
The relation $f$ is surjective, and there exists $b\in [0,1]$ with 
$b\in f(b)=B$ so that for all $x \in [0,1]$, 
either $f(x)=\{x\}$, $f(x)=B$, or $f(x)=\{y\}$ for some $y\in B$ 
satisfying $f(y)=\{y\}$.
\end{enumerate}
\end{proposition}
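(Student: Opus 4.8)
The plan is to reduce every case to a single application of Proposition~\ref{Vrelcondition}. Since $f$ is assumed USC throughout, and surjectivity is either hypothesized outright (in conditions 2, 3, 5) or immediate (condition 1 gives $x\in f(x)$ for every $x$; condition 4 gives $f(a)=[0,1]$ for any $a$ in the non-empty set $A$), it suffices in each case to show that whenever $A=f(x)$ for some $x\in[0,1]$, then $f(A)=A$. So the routine in every case is the same: first enumerate the possible ``images of points'' $A$ for the given $f$, then verify $f(A)=A$ for each such $A$ by a direct computation of $f(A)=\bigcup_{t\in A}f(t)$ from the definitions, and finally invoke Proposition~\ref{Vrelcondition}.

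The three easy cases are 1, 4, and 5. For condition 1 the only images are $A=\{x,1-x\}$, and $f(A)=f(x)\cup f(1-x)=\{x,1-x\}\cup\{1-x,x\}=A$. For condition 4 the images are $[0,1]$ (realized since $A\neq\emptyset$) and possibly $B$; here $f([0,1])=[0,1]$ because $f(a)=[0,1]$ for $a\in A$, while $f(B)=B$ because $A\cap B=\emptyset$ forces $f(t)=B$ for every $t\in B$. For condition 5 the images are $B$ together with singletons $\{x\}$ arising from points with $f(x)=\{x\}$; the singleton case is trivial, and for $A=B$ one notes that each of the three permitted forms of $f(t)$ for $t\in B$ lies inside $B$ (using $t\in B$), giving $f(B)\subseteq B$, while $B=f(b)\subseteq f(B)$ since $b\in B$, so $f(B)=B$.

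Condition 2 is the one that needs care. The images $A=f(x)$ split into two types: $A=[0,x]$, or $A\subseteq\{0,x\}$. In the first type, every $t\in[0,x]$ satisfies $f(t)\subseteq[0,t]\subseteq[0,x]$ (whether $f(t)=[0,t]$ or $f(t)\subseteq\{0,t\}$), so $f(A)\subseteq[0,x]=A$; and since $x\in A$ and $f(x)=[0,x]$, we get $A=f(x)\subseteq f(A)$, hence $f(A)=A$. In the second type one checks the finitely many possibilities $A\in\{\{0\},\{x\},\{0,x\}\}$ directly, using that $f(0)=\{0\}$ (as $f(0)\subseteq\{0\}$ is non-empty) and $f(x)=A$. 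Condition 3 then follows from condition 2 by conjugating with the involution $h(x)=1-x$: the relation $h\circ f\circ h$ is USC, surjective, and idempotent exactly when $f$ is, and it satisfies condition 2 precisely when $f$ satisfies condition 3.

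I expect the only real obstacle to be the bookkeeping in conditions 2 and 3. One must check that the reverse inclusion $A\subseteq f(A)$ genuinely holds in every sub-case — it need not come from ``$x\in f(x)$'' when $f(x)\subsetneq\{0,x\}$, so the degenerate singleton sub-cases must be handled separately — and one must use that the $[0,x]$-versus-$\{0,x\}$ dichotomy is tight enough that $f([0,x])$ cannot escape $[0,x]$. Everything else is bounded, finite-case verification.
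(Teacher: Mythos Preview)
Your proof is correct and essentially follows the paper's approach: the paper verifies idempotence by checking $f^2(x)=f(x)$ case by case, which is literally the same computation as your $f(A)=A$ for $A=f(x)$ via Proposition~\ref{Vrelcondition}. The only cosmetic differences are that the paper cites Ingram for condition~1 rather than computing, and leaves condition~3 to the reader rather than using your conjugation by $x\mapsto 1-x$.
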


\begin{proof}
Each of the five conditions implies that $f$ is surjective, so it remains to verify that each condition implies that $f$ is idempotent.  Ingram already observed that condition 1 implies $f$ is idempotent in \cite{Ingrambook}. For condition 2, let $x \in [0,1]$. We note that $f(y)\subseteq [0,x]$ for every $y \le x$. Therefore, if $f(x)= [0,x]$, then clearly $f^2(x)= [0,x]=f(x)$. On the other hand, if $f(x) = \{0,x\}$, then $f^2(x)=f(0) \cup f(x) = \{0\} \cup \{0,x\}=f(x)$; the remaining cases are obvious. The details for condition 3 are similarly straightforward and are left to the reader.

For condition 4, we note that when $x \in A$, $f(x)=[0,1]$ (so of course $f^2(x)=[0,1]$), whereas if $x \in [0,1] \setminus A$, then $f(x)=B$, so that $f^2(x)=f(B)$. However, when $b \in B$, since $b \not\in A$, it follows that $f(b)=B$; thus, $f^2(x)=B$, and we conclude that $f$ is idempotent.

To prove that condition 5 implies $f$ is idempotent, let $x \in [0,1]$. If $f(x)=\{x\}$ then clearly $f^2(x)=f(x)$. If $f(x)=\{y\}$ for some $y \in B$ with $f(y)=\{y\}$, then $f^2(x)=f(y)=\{y\}=f(x)$. Finally, if $f(x)=B$, then $f^2(x)=f(B)$; since $f(b) \subseteq B$ for each $b \in B$, and there is some $b \in B$ that satisfies $f(b)=B$, it follows that $f(B)=B$. Thus, $f^2(x)=f(x)$ in each case.
\end{proof}

Note that Proposition \ref{Vrelcondition2} implies that each of the relations pictured in Figure \ref{VRelation}  is a V-relation. (Of course, the sufficient conditions given in Proposition \ref{Vrelcondition2} are by no means exhaustive.)

\section{
Condition \texorpdfstring{\(\Gamma\)}{Gamma}
}\label{conditionGamma}

\begin{definition}
  A relation \(f \subseteq X^2\) satisfies \term{Condition \(\Gamma\)} if there 
  exist distinct \(x, y \in X\) such that \(\tuple{x,x},\tuple{x,y},\tuple{y,y}\in f\).
\end{definition}

Note that in Figure \ref{VRelation}, every example given besides the
identity satisfies Condition \(\Gamma\).
In fact, this section will show that every V-relation besides the identity on a 
weakly countably compact space satisfies Condition \(\Gamma\).

\begin{definition}
  Let \(\iota\subseteq X^2\) be the diagonal
  \(\iota=\{\tuple{x,x}:x\in X\}\), i.e. the identity relation.
\end{definition}

\begin{definition}
  For \(f\subseteq X\times Y\) and \(A\subseteq X\), 
  let \(f\rest A=f\cap (A\times Y)\). 
\end{definition}

\begin{proposition}
  For \(f\) transitive, \(f\rest f(x)=f\cap (f(x))^2\).
\end{proposition}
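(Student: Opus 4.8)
The plan is to unwind both sides into subsets of $X\times X$ and then verify the resulting set equality by two inclusions, one of which is purely formal and one of which is where transitivity enters. Setting $A=f(x)\subseteq X$ for brevity, the definition of restriction gives $f\rest f(x)=f\cap(A\times X)$, while $(f(x))^2=A\times A$; so the goal becomes $f\cap(A\times X)=f\cap(A\times A)$. The inclusion ``$\supseteq$'' is immediate, since $A\times A\subseteq A\times X$, and this direction uses nothing about $f$.

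For ``$\subseteq$'', I would take an arbitrary $\tuple{a,b}\in f\cap(A\times X)$ and argue that $b\in A$. Since $a\in A=f(x)$ we have $\tuple{x,a}\in f$; combining this with $\tuple{a,b}\in f$ and applying transitivity in the form $f^2\subseteq f$ (i.e. $x\mathrel{f}a$ and $a\mathrel{f}b$ imply $x\mathrel{f}b$) yields $\tuple{x,b}\in f$, that is, $b\in f(x)=A$. Hence $\tuple{a,b}\in f\cap(A\times A)$, which closes the equality.

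I do not expect any genuine obstacle; the only care needed is in matching the paper's conventions — that $f\subseteq X^2$ is assumed so that $f(x)\subseteq X$ and the expression $f\rest f(x)$ is meaningful, and that $f^2\subseteq f$ is precisely the implication used in the key step. It is also worth remarking that only this one half of idempotence, namely transitivity, is invoked, which is why the hypothesis is stated as ``$f$ transitive'' rather than ``$f$ idempotent.''
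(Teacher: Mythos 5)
Your proof is correct; the paper states this proposition without proof, and your argument is exactly the intended one: the inclusion $\supseteq$ is formal, and the inclusion $\subseteq$ follows by applying transitivity to $\tuple{x,a},\tuple{a,b}\in f$ to conclude $b\in f(x)$. Your closing remark correctly identifies that only $f^2\subseteq f$ (not full idempotence) is needed, matching the hypothesis as stated.
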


\begin{definition}
  A relation is said to be \term{trivial} if for all
  \(x\in X\), \(f\rest f(x)=\iota \rest f(x)\).
\end{definition}

Any trivial relation is idempotent, and
of course \(\iota\) is trivial, but there exist trivial
idempotent relations besides the identity. For example, let
\(
  t\subseteq 3^2=\{0,1,2\}^2
\)
be defined by
\[
  t=\{\tuple{0,0},\tuple{1,1},\tuple{2,0},\tuple{2,1}\}
.\]
It follows that \(t\rest f(0)=t\rest\{0\}=\iota\rest\{0\}\),
\(t\rest f(1)=t\rest\{1\}=\iota\rest\{1\}\),
and \(t\rest f(2)=t\rest\{0,1\}=\iota\rest\{0,1\}\).
Figure \ref{trivialRelation} shows examples of trivial relations
defined on \([0,1]\); note that these do not satisfy Condition \(\Gamma\).

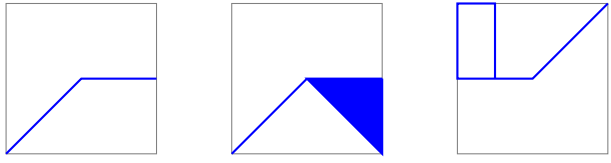
\begin{figure}
\begin{center}
\begin{tikzpicture}
\begin{scope}
  \draw[gray] (0,0) rectangle (2,2);
  \draw[thick,blue] (0,0) -- (1,1) -- (2,1);
\end{scope}
\begin{scope}[shift={(3,0)}]
  \draw[gray] (0,0) rectangle (2,2);
  \draw[thick,blue] (0,0) -- (1,1);
  \draw[thick,blue,fill=blue] (2,1) -- (2,0) -- (1,1) -- (2,1);
\end{scope}
\begin{scope}[shift={(6,0)}]
  \draw[gray] (0,0) rectangle (2,2);
  \draw[thick,blue] (2,2) -- (1,1) -- (0,1) -- (0,2) -- (0.5,2) -- (0.5,1);
\end{scope}
\end{tikzpicture}
\end{center}
\caption{Trivial idempotent relations on \([0,1]\) besides \(\iota\)}
\label{trivialRelation}
\end{figure}

Observing that none of the examples in Figure \ref{trivialRelation}
are surjective, the following proposition will allow us to ignore
such exceptional cases when considering V-relations besides the identity. 

\begin{proposition}
  Let \(f\) be an idempotent surjective relation on \(X\).
  Then the following are equivalent.
  \begin{enumerate}[a)]
  \item \(f=\iota\)
  \item \(|f(x)|=1\) for all \(x\in X\)
  \item \(f\) is trivial
  \end{enumerate}
\end{proposition}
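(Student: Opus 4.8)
The plan is to prove the cycle of implications $a)\Rightarrow b)\Rightarrow c)\Rightarrow a)$. The first implication is immediate, since $\iota(x)=\{x\}$ has exactly one element for every $x\in X$.

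For $b)\Rightarrow c)$, I would fix $x\in X$ and use the hypothesis to write $f(x)=\{y\}$. Idempotence then gives $\{y\}=f(x)=f^2(x)=\bigcup_{z\in f(x)}f(z)=f(y)$, so $f\rest f(x)=f\rest\{y\}=\{\tuple{y,y}\}=\iota\rest\{y\}=\iota\rest f(x)$; as $x$ was arbitrary, $f$ is trivial. This is the one step that genuinely uses idempotence — without it, $x\mapsto 1-x$ on $[0,1]$ would already be a surjective counterexample to $b)\Rightarrow a)$.

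For $c)\Rightarrow a)$ I would unwind the definition of triviality in both directions. Since $\iota\rest f(x)=f\rest f(x)\subseteq f$, we get $\tuple{a,a}\in f$ whenever $a\in f(x)$ for some $x$; and since $f\rest f(x)\subseteq\iota\rest f(x)$, any $\tuple{a,b}\in f$ with $a\in f(x)$ forces $b=a$, i.e. $f(a)\subseteq\{a\}$. Hence $f(a)=\{a\}$ whenever $a$ lies in the image of some point. Now surjectivity says every point of $X$ is of this form, so $f(a)=\{a\}$ for all $a\in X$, i.e. $f=\iota$.

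I do not expect a serious obstacle; the only mild subtlety is reading the equation defining triviality in \emph{both} directions — the $\supseteq$ half is what guarantees $a\in f(a)$ (hence that $f(a)$ is nonempty), while surjectivity is what globalizes the conclusion from the image of a single point to all of $X$. Notably, no topology on $X$ and no closedness of $f$ enters the argument.
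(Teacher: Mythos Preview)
Your proof is correct and follows the same cycle $a)\Rightarrow b)\Rightarrow c)\Rightarrow a)$ as the paper, with essentially identical reasoning at each step. Your $c)\Rightarrow a)$ is marginally cleaner: you read off $f(a)=\{a\}$ directly from triviality at a point $x$ with $a\in f(x)$, whereas the paper first invokes idempotence to obtain $a\in f(a)$ and then applies triviality at $a$ itself --- as you correctly observe, idempotence is not actually needed for this implication.
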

\begin{proof}
  (a) implies (b) trivially, so assume (b). For an arbitrary \(x\in X\),
  there exists some \(y\in X\) such that \(f(x)=\{y\}\). 
  Thus \(f^2(x)=\{y\}=f(y)\) by transitivity, so it follows that 
  \(f\rest\{y\}=\{\tuple{y,y}\}=\iota\rest\{y\}\), showing (c).
  
  Now assuming (c), take \(y\in X\). As \(f\) is surjective, 
  there exists \(x\in X\) where \(y\in f(x)\). By idempotence,
  \(y\in f^2(x)\), and in particular, \(y\in f(y)\).
  But since \(f\rest f(y)=\iota\rest f(y)\) by triviality,
  we note that \(f(y)=(f\rest f(y))(y)=(\iota\rest f(y))(y)=\iota(y)=\{y\}\),
  showing (a).
\end{proof}

\begin{corollary}
  \(\iota\) is the only trivial V-relation.
\end{corollary}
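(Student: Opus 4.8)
The plan is to read this off directly from the preceding proposition, since a V-relation is by definition a USC idempotent surjective relation, and so in particular an idempotent surjective relation to which that proposition applies. First I would check that \(\iota\) is itself a trivial V-relation: it is idempotent and surjective, its graph is the diagonal of \(X^2\) which is closed because \(X\) is Hausdorff, so \(\iota\) is USC, and triviality of \(\iota\) was already observed just before the proposition. Hence \(\iota\) is at least one trivial V-relation.

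Conversely, suppose \(f\) is any trivial V-relation. Then \(f\) is an idempotent surjective relation on \(X\) satisfying condition (c) of the preceding proposition, so by the equivalence (c) \(\Rightarrow\) (a) we conclude \(f=\iota\). Combining the two directions gives that \(\iota\) is the unique trivial V-relation.

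There is essentially no obstacle here; the only point requiring a word of justification is that \(\iota\) is USC, which is where the standing Hausdorff hypothesis on all spaces is used (to ensure the diagonal is closed). Everything else is a direct citation of the previous proposition.
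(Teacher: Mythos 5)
Your proposal is correct and matches the paper's intent exactly: the corollary is stated without proof precisely because it is the implication (c) \(\Rightarrow\) (a) of the preceding proposition applied to a V-relation, together with the observation that \(\iota\) itself qualifies. Your added remark that USC-ness of \(\iota\) rests on the standing Hausdorff assumption (so the diagonal is closed) is a fair point of care that the paper leaves implicit.
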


We now aim to exploit the properties of non-trivial relations
to produce Condition \(\Gamma\) in V-relations besides \(\iota\).
To do this, we require the following lemmas.

\begin{definition}
A weakly countably compact space is a space such that every infinite
subset has a limit point.
\end{definition}

Assuming spaces are \(T_2\), this property is equivalent
to countable compactness: every countable open cover has a finite subcover.
But we will need only this weaker characterization, and the remainder
of this section does not assume any separation axioms for the spaces under
consideration.

\begin{lemma}\label{lemma-usc-idempotent}
  Let \(X\) be weakly countably compact.
  Every USC idempotent relation \(f\subseteq X^2\) besides \(\iota\)
  contains two points \(\tuple{y,y},\tuple{x,y}\) for some 
  distinct \(x,y\in X\).
\end{lemma}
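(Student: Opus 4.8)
The plan is a descent argument powered by the closed graph of $f$.

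First a trivial case: if $\iota\subseteq f$, then since $f\ne\iota$ there is $\tuple{x,y}\in f$ with $x\ne y$, and $\tuple{y,y}\in f$, so we finish. Thus assume some $z$ has $z\notin f(z)$. It then suffices to produce a \emph{fixed point with an external predecessor}: a point $y$ with $y\in f(y)$ that also lies in $f(z')$ for some $z'\ne y$. The facts I would keep on hand are consequences of the graph of $f$ being closed: (a) if $a\notin f(a)$ there is an open $V\ni a$ with $f(b)\cap V=\emptyset$ for every $b\in V$; and (b) if $a$ belongs to every neighborhood of a point $w$, then $\tuple{w,c}\in\overline{\{\tuple{a,c}\}}\subseteq f$ for each $c\in f(a)$, so $f(a)\subseteq f(w)$.

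Next I would reduce to the case that every image is either a singleton $\{z'\}$ or infinite: if some $f(z')$ is finite and $\ne\{z'\}$, then $f$ restricted to the finite set $f(z')$ is a surjective idempotent relation there, and a short minimality argument inside $f(z')$ produces a fixed point with an external predecessor. With this in hand, and $f(z)$ infinite, I would run the descent. Set $A_0=f(z)$, which is infinite, closed, hence weakly countably compact. Given an infinite closed $A_\alpha=f(z_\alpha)$ with $z_\alpha\notin A_\alpha$, choose a limit point $z_{\alpha+1}\in A_\alpha$. If $z_{\alpha+1}\in f(z_{\alpha+1})$ we are done, since $z_{\alpha+1}\in f(z_\alpha)$ and $z_\alpha\ne z_{\alpha+1}$; otherwise $A_{\alpha+1}:=f(z_{\alpha+1})$ is again infinite and $A_{\alpha+1}\subsetneq A_\alpha$ properly, because $z_{\alpha+1}\in A_\alpha\setminus A_{\alpha+1}$. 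Transitivity gives $z_\beta\in f(z_\alpha)$ whenever $\alpha<\beta$, so each $A_\alpha$ contains the later $z_\beta$'s. At a limit stage $\lambda$ I would examine a limit point $p$ of $(z_\alpha)_{\alpha<\lambda}$; if $p\in f(p)$ we are again done, and otherwise (a) together with the transitivity of the orbit forces $p$ to lie in the closure of a single $z_\gamma$, and (b) then gives $f(z_\gamma)\subseteq f(p)$ and $p\in f(z_\alpha)$ for all $\alpha<\gamma$, so that $f(p)$ is an infinite closed set strictly smaller than $A_0$; one then restarts the descent from a suitable point (either $p$ itself or the associated $z_{\gamma+1}$), keeping the process going but with a strictly smaller starting image.

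The crux of the whole argument is this limit-stage behavior. Here weak countable compactness is strictly weaker than countable compactness, and no separation axioms are available, so ``limit point'' is only the weak notion and there is no finite-intersection principle to lean on; the dichotomy via (a) and (b) is exactly what substitutes for it. One then has to verify the bookkeeping: within each descent segment the $A_\alpha$ form a strictly decreasing chain of subsets of $X$ (so the segment has bounded length), and the successive starting images likewise form a strictly decreasing chain (so there are only set-many restarts). Consequently the process is well-founded and must terminate, and since it can only halt by reaching one of the ``done'' clauses, it delivers the desired distinct $x,y$ with $\tuple{y,y},\tuple{x,y}\in f$. Pinning down this limit-stage dichotomy — showing it is exhaustive, that the restart genuinely descends, and that it meshes cleanly with the finite-image reduction — is where the real work lies.
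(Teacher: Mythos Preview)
Your approach has a genuine gap exactly where you say the real work lies, and the paper's proof shows that this work is avoidable.

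The transfinite descent with restarts does not close up. At a limit stage \(\lambda\) you take a limit point \(p\) of \((z_\alpha)_{\alpha<\lambda}\); if \(p\notin f(p)\), you propose to restart with a strictly smaller starting image. But the sequence of starting images is itself a transfinite strictly decreasing chain of closed sets, and at a limit of restarts you face the same problem again with no new tool to resolve it. Your dichotomy via (a) and (b) tells you that \(p\) is only ``seen'' by early \(z_\alpha\)'s, but it does not let you continue the original chain (since \(f(p)\) need not sit below all the \(A_\alpha\)), nor does it give a well-founded rank for the nested restarts. The bookkeeping you allude to does not handle this second-order limit behavior.

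The paper sidesteps all of this by never going past \(\omega\). One builds \(x_0,x_1,\dots\) with \(x_{n+1}\in f(x_n)\setminus\{x_0,\dots,x_n\}\) (using idempotence to ensure \(\tuple{x_i,x_j}\in f\) exactly when \(i<j\)); if some \(x_{n+1}\) is fixed, stop. Otherwise take a single limit point \(x_\omega\) of the infinite set \(\{x_n\}\). The crucial observation you are missing is that one need not \emph{check} whether \(x_\omega\in f(x_\omega)\) and recurse otherwise: closedness of \(f\) in \(X^2\) forces it. Because \(\tuple{x_n,x_{n+1}}\in f\) for all \(n\), the point \(\tuple{x_\omega,x_\omega}\) lies in \(\overline{f}=f\); because \(\tuple{x_0,x_n}\in f\) for all \(n\ge 1\), the point \(\tuple{x_0,x_\omega}\) lies in \(\overline{f}=f\). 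So \(y=x_\omega\), \(x=x_0\) finish the lemma after one application of weak countable compactness, with no transfinite machinery, no restarts, and no finite-image reduction.
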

\begin{proof}
  Note first that if \(\iota\subseteq f\), then since
  \(\iota\not=f\) the lemma follows
  immediately. So 
  let \(x_0\in X\) be a point where \(\tuple{x_0,x_0}\not\in f\).
  
  Suppose \(x_i\) is defined for \(i\leq n\) such that
  \(\tuple{x_i,x_j}\in f\) if and only if \(i<j\).
  Since \(\{x_0,\dots,x_n\}\cap f(x_n)=\emptyset\),
  we may choose \(x_{n+1}\not\in\{x_0,\dots,x_n\}\)
  such that \(\tuple{x_n,x_{n+1}}\in f\).
  Note that by idempotence, \(\tuple{x_{n+1},x_i}\not\in f\)
  for \(i\leq n\) since 
  \(x_i\not\in f(x_n)=f^2(x_n)\supseteq f(x_{n+1})\).
  Similarly, \(\tuple{x_i,x_{n+1}}\in f\) for \(i<n\) since
  \(\tuple{x_i,x_n}\in f\) and \(\tuple{x_n,x_{n+1}}\in f\).
  If \(\tuple{x_{n+1},x_{n+1}}\in f\),
  then the lemma is satisfied by \(x=x_n\) and \(y=x_{n+1}\).
  
  If not, we have recursively constructed an infinite
  set \(\{x_n:n<\omega\}\).
  Since \(X\) is a weakly countably compact space, 
  \(\{x_n:n<\omega\}\) has a limit point \(x_\omega\).
  Since \(\{\tuple{x_n,x_{n+1}}:n<\omega\}\subseteq f\), it follows that
  \(\tuple{x_\omega,x_\omega}\) is a limit point of \(f\).
  Similarly, \(\{\tuple{x_0,x_n}:0<n<\omega\}\subseteq f\),
  so it follows that \(\tuple{x_0,x_\omega}\)
  is also a limit point of \(f\). Since \(f\) is closed,
  these limit points belong to \(f\).
  Therefore the lemma is witnessed by
  \(x=x_0\) and \(y=x_\omega\).
\end{proof}

\begin{lemma}
  Let \(f\subseteq X^2\) be idempotent and serial, and let \(x_0\in X\).
  Then \(f\rest f(x_0)\) and its inverse are idempotent, serial,
  surjective relations on \(f(x_0)\).
\end{lemma}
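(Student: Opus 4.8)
The plan is to abbreviate $A=f(x_0)$ and $g=f\rest A$, and to derive every claimed property from idempotence of $f$ together with the single identity $f^2(x_0)=f(x_0)=A$. Before anything else I would observe that $g$ is genuinely a relation \emph{on} $A$: since idempotent relations are transitive, the earlier recharacterization of $f\rest f(x)$ gives $g=f\rest A=f\cap A^2\subseteq A^2$. This observation does double duty, since it also yields half of idempotence for free: $g$ is an intersection of the transitive relations $f$ and $A^2$, hence transitive, so $g^2\subseteq g$. Thus the only remaining part of ``$g$ is idempotent'' is the inclusion $g\subseteq g^2$.

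The three ``forward'' statements are then proved by the same move. For seriality of $g$: given $a\in A$, seriality of $f$ provides some $b\in f(a)$; since $a\in f(x_0)$ we get $b\in f(f(x_0))=f^2(x_0)=A$, so $\tuple{a,b}\in f\cap A^2=g$. For surjectivity of $g$: given $b\in A=f(x_0)=f^2(x_0)$, pick $y\in f(x_0)=A$ with $b\in f(y)$, so that $\tuple{y,b}\in g$. For $g\subseteq g^2$: given $\tuple{a,c}\in g$ we have $c\in f(a)=f^2(a)$, so choose $y\in f(a)$ with $c\in f(y)$; as $a\in f(x_0)$, idempotence again forces $y\in f^2(x_0)=A$, whence $\tuple{a,y},\tuple{y,c}\in g$ and $\tuple{a,c}\in g^2$. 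The one point worth stating cleanly---and really the only idea in the proof---is that applying $f$ once or twice to a point of $A$ lands back inside $A$, which is precisely idempotence of $f$ evaluated at $x_0$.

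For the inverse there is essentially nothing left to do: $g^{-1}$ is idempotent because the inverse of an idempotent relation is idempotent (as noted earlier in the paper), seriality of $g^{-1}$ is exactly surjectivity of $g$, and surjectivity of $g^{-1}$ is exactly seriality of $g$---both already in hand. I do not expect a genuine obstacle here; the only things to watch are not conflating ``serial'' with ``surjective'' and keeping the reduction $g\subseteq A^2$ in place, so that calling $g$ and $g^{-1}$ relations ``on $f(x_0)$'' is justified.
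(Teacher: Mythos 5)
Your proof is correct and rests on the same key observation as the paper's, namely that $f(x)\subseteq f^2(x_0)=f(x_0)$ for every $x\in f(x_0)$, so that restricting to $f(x_0)$ loses nothing. The paper packages this slightly more compactly as the identity $g(x)=f(x)$ for $x\in f(x_0)$ (from which $g^2(x)=f^2(x)=f(x)=g(x)$ follows in one line), whereas you split idempotence into the two inclusions and handle $g^2\subseteq g$ via transitivity of the intersection, but this is a stylistic rather than substantive difference.
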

\begin{proof}
  Let \(g=f\rest f(x_0)\). For each \(x\in f(x_0)\), \(f(x)\subseteq f^2(x_0)=f(x_0)\).
  Thus \(g(x)=f(x)\not=\emptyset\) and 
  \(g^2(x)=f^2(x)=f(x)=g(x)\), showing that \(g\) is idempotent and serial.
  It is also surjective: for \(y\in f(x_0)\), \(y\in f^2(x_0)\), so there
  exists some \(x\in f(x_0)\) such that \(g(x)=f(x)=y\).
  Since \(g\) is serial, surjective, and idempotent, so is \(g^{-1}\).
\end{proof}

\begin{lemma}\label{lemma-nontrivial-idempotent}
  Let \(f\subseteq X^2\) be idempotent and serial, and let \(x_0\in X\)
  witness that \(f\) is non-trivial.
  Then \(f\rest f(x_0)\) and its inverse are idempotent, serial,
  surjective, non-trivial relations on \(f(x_0)\).
\end{lemma}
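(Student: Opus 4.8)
The plan is to lean entirely on the preceding lemma together with the dichotomy proposition for idempotent surjective relations, since almost all of the asserted properties are already in hand. Abbreviate \(g = f\rest f(x_0)\), a relation on \(f(x_0)\). By the preceding lemma, both \(g\) and \(g^{-1}\) are idempotent, serial, and surjective on \(f(x_0)\); hence the only thing left to establish is that each of \(g\) and \(g^{-1}\) is non-trivial.

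First I would unpack what it means for \(x_0\) to witness that \(f\) is non-trivial: by the definition of triviality this says precisely \(f\rest f(x_0)\neq\iota\rest f(x_0)\), and since \(\iota\rest f(x_0)\) is exactly the diagonal of \(f(x_0)\) while \(g\) is a relation on \(f(x_0)\), this is literally the statement \(g\neq\iota\) (as relations on the space \(f(x_0)\)). Now I would invoke the proposition characterizing when an idempotent surjective relation equals \(\iota\): because \(g\) is idempotent and surjective on \(f(x_0)\) yet \(g\neq\iota\), condition (c) of that proposition must fail, i.e.\ \(g\) is not trivial.

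For the inverse, I would route through \(\iota\) again rather than try to pass triviality directly across inversion (for a general relation, triviality of \(f\) and of \(f^{-1}\) are not obviously equivalent). The identity relation is its own inverse and inversion is an involution, so \(g^{-1}=\iota\) would force \(g=\iota\), contradicting the previous paragraph; hence \(g^{-1}\neq\iota\). Since \(g^{-1}\) is likewise idempotent and surjective on \(f(x_0)\), the same proposition yields that \(g^{-1}\) is non-trivial, completing the argument. I do not expect any real obstacle here: the combinatorial work was already discharged in the two preceding lemmas and in the \(\iota\)-dichotomy, and the only point requiring care is matching the formal definition of ``\(x_0\) witnesses non-triviality'' with the clean statement \(g\neq\iota\) before feeding it to the dichotomy.
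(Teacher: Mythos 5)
Your proposal is correct and follows essentially the same route as the paper: reduce everything to the preceding lemma, identify the non-triviality witness with the statement \(g\neq\iota\rest f(x_0)\), and then use the fact that \(\iota\rest f(x_0)\) is the unique trivial idempotent surjective relation on \(f(x_0)\) (noting that \(\iota\) is self-inverse to handle \(g^{-1}\)). No gaps.
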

\begin{proof}
  By the previous lemma, \(g=f\rest f(x_0)\) and \(g^{-1}\) 
  are idempotent, serial, and surjective.
  By non-triviality, \(g\not=\iota \rest f(x_0)\),
  so \(g^{-1}\not=\iota\rest f(x_0)\) too.
  But since \(\iota\rest f(x_0)\) 
  is the only surjective idempotent trivial relation on \(f(x_0)\),
  neither \(g\) nor \(g^{-1}\) are trivial.
\end{proof}

\begin{theorem}
  Let \(X\) be weakly countably compact, and let \(f\subseteq X^2\) be
  an idempotent USC relation. Then the following are equivalent.
  \begin{enumerate}[a)]
  \item \(f\) satisfies Condition \(\Gamma\)
  \item \(f\) contains points \(\tuple{x,x},\tuple{x,y}\) for some distinct
        \(x,y\in X\)
  \item \(f\) is non-trivial
  \end{enumerate}
\end{theorem}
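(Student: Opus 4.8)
The plan is to establish the cycle (a) $\Rightarrow$ (b) $\Rightarrow$ (c) $\Rightarrow$ (a). The implication (a) $\Rightarrow$ (b) is immediate, since Condition $\Gamma$ in particular supplies distinct $x,y$ with $\tuple{x,x},\tuple{x,y}\in f$. For (b) $\Rightarrow$ (c), suppose $\tuple{x,x},\tuple{x,y}\in f$ with $x\neq y$. Then $x,y\in f(x)$, so by transitivity (which holds as $f$ is idempotent) $\tuple{x,y}\in f\cap(f(x))^2=f\rest f(x)$, whereas $\tuple{x,y}\notin\iota\rest f(x)$ because $x\neq y$; hence $f\rest f(x)\neq\iota\rest f(x)$, so $f$ is non-trivial. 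The real content is therefore (c) $\Rightarrow$ (a).

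For (c) $\Rightarrow$ (a), I would first pass to a V-relation. Fix $x_0$ witnessing that $f$ is non-trivial. By Lemma~\ref{lemma-nontrivial-idempotent}, $g:=f\rest f(x_0)$ and $g^{-1}$ are idempotent, serial, surjective, and non-trivial on $Y:=f(x_0)$; moreover each is USC, since $g=f\cap Y^2$ (and its transpose) is closed and $Y$, being a value of the USC relation $f$, is a closed---hence weakly countably compact---subspace of $X$. Thus $g$ and $g^{-1}$ are non-trivial V-relations on a weakly countably compact space, so by the corollary that $\iota$ is the only trivial V-relation, $g\neq\iota$ and $g^{-1}\neq\iota$; and since $g\subseteq f$ it suffices to produce Condition $\Gamma$ for $g$. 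Applying Lemma~\ref{lemma-usc-idempotent} to the USC idempotent relation $g^{-1}\neq\iota$ yields distinct $u,v\in Y$ with $\tuple{v,v},\tuple{v,u}\in g$: a fixed point $v$ of $g$ together with some $u\in g(v)\setminus\{v\}$. Using $g^{-1}$ rather than $g$ here is the point --- it places the fixed point at the \emph{source} of an edge rather than at its sink.

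Now split into two cases. If $u\in g(u)$, then $\tuple{v,v},\tuple{v,u},\tuple{u,u}\in g$ with $v\neq u$, which is Condition $\Gamma$. Otherwise $u$ is not a fixed point. Then $g(u)$ is nonempty (as $g$ is serial), closed, and hence a weakly countably compact space, and $g\rest g(u)=g\cap(g(u))^2$ is an idempotent USC relation on it. Every idempotent USC relation on a nonempty weakly countably compact space has a fixed point --- trivially if it equals $\iota$, and otherwise by Lemma~\ref{lemma-usc-idempotent} --- so there is $w\in g(u)$ with $w\in g(w)$. Since $u\in g(v)$ we get $g(u)\subseteq g^2(v)=g(v)$, so $w\in g(v)$, i.e.\ $\tuple{v,w}\in g$; and both $v$ and $w$ are fixed points of $g$. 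Finally $v\neq w$: if $v=w$ then $v\in g(u)$, which together with $u\in g(v)$ forces $g(v)=g(u)$ by transitivity, whence $u\in g(v)=g(u)$ contradicts that $u$ is not a fixed point. Therefore $\tuple{v,v},\tuple{v,w},\tuple{w,w}\in g\subseteq f$ with $v\neq w$, giving Condition $\Gamma$.

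I expect the main obstacle to be this second case of (c) $\Rightarrow$ (a): one fixed point comes cheaply, but Condition $\Gamma$ demands two distinct fixed points joined by an edge, and the second must be found ``downstream'' of the first. The resolution rests on two observations --- that the restriction $g\rest g(u)$ inherits both idempotence and USC-ness, hence has a fixed point of its own via the already-proved Lemma~\ref{lemma-usc-idempotent}; and that the identity $g^2=g$ rules out the degenerate coincidence $v=w$. The reduction to $g$ at the outset is also essential, precisely because the surjectivity of $g$ is what licenses applying Lemma~\ref{lemma-usc-idempotent} to the inverse relation $g^{-1}$ to obtain the initial configuration.
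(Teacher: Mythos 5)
Your proof is correct, and the implication that carries the real weight --- getting from non-triviality to the full Condition $\Gamma$ configuration --- is handled by a genuinely different mechanism than the paper's. Both arguments agree up to a point: (a)$\Rightarrow$(b)$\Rightarrow$(c) is identical, and both use Lemma~\ref{lemma-nontrivial-idempotent} to pass to $g=f\rest f(x_0)$ and apply Lemma~\ref{lemma-usc-idempotent} to $g^{-1}$ to obtain a fixed point $v$ with an edge $\tuple{v,u}$ leaving it. But where you then restrict a second time to $g(u)$ and invoke Lemma~\ref{lemma-usc-idempotent} again (or triviality of the restriction) to find a second fixed point $w$ downstream, finishing with a purely algebraic check that idempotence forbids $v=w$, the paper instead proves (b)$\Rightarrow$(a) by essentially re-running the entire recursive construction from the proof of Lemma~\ref{lemma-usc-idempotent} --- seeding it with $z_0=x$, $z_1=y$ where $\tuple{y,y}\notin f$, building an infinite chain, and extracting the second diagonal point as a limit point of the chain in $X$. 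Your route reuses the lemma as a black box and avoids duplicating the recursion and the limit-point argument; the price is that you must verify that the slices $f(x_0)$ and $g(u)$ are closed, hence weakly countably compact, subspaces on which the lemma legitimately applies. That verification is worth making explicit in any case, since the paper's own application of Lemma~\ref{lemma-usc-idempotent} to $g^{-1}$ on the subspace $f(x_0)$ silently requires the same fact, and you supply it where the paper does not. The paper's repetition, on the other hand, keeps the second half of the argument entirely inside the ambient space $X$ and yields the two diagonal points and the connecting edge in one sweep of the same construction.
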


\begin{proof}
  (a) implies (b) trivially. Assuming (b), it follows that
  \(f\rest f(x)\not=\iota\rest f(x)\) since \(\tuple{x,y}\in f\rest f(x)\),
  showing (c).
  
  Assuming (c), we may apply Lemma \ref{lemma-nontrivial-idempotent} 
  to choose \(x_0\)
  such that \(g=f\rest f(x_0)\) and \(g^{-1}\) are idempotent, USC,
  surjective, non-trivial relations. 
  Then Lemma \ref{lemma-usc-idempotent} allows us to conclude that
  \(g^{-1}\) contains points \(\tuple{x,x},\tuple{y,x}\) for some distinct
  \(x,y\in X\), so (b) is satisfied.
  
  Assume (b) such that \(\tuple{y,y}\not\in f\).
  Note then that \(\tuple{y,x}\not\in f\) as otherwise
  \(\tuple{y,x},\tuple{x,y}\in f\) would imply \(\tuple{y,y}\in f\).
  Let \(z_0=x\) and \(z_1=y\). 
  
  Suppose \(z_i\) is defined for \(i\leq n+1\) such that
  \(\tuple{z_i,z_j}\in f\) if and only if \(i<j\) or \(i=j=0\).
  Since \(\{z_0,\dots,z_{n+1}\}\cap f(z_{n+1})=\emptyset\),
  we may choose \(z_{n+2}\) distinct from \(z_i\) for \(i\leq n+1\) 
  such that \(\tuple{z_{n+1},z_{n+2}}\in f\).
  Note that by idempotence, \(\tuple{z_{n+2},z_i}\not\in f\)
  for \(i\leq n+1\) since 
  \(z_i\not\in f(z_{n+1})=f^2(z_{n+1})\supseteq f(z_{n+2})\).
  Similarly, \(\tuple{z_i,z_{n+2}}\in f\) for \(i<n+1\) since
  \(\tuple{z_i,z_{n+1}}\in f\) and \(\tuple{z_{n+1},z_{n+2}}\in f\).
  If \(\tuple{z_{n+2},z_{n+2}}\in f\),
  then Condition \(\Gamma\) is witnessed by 
  \(\tuple{z_0,z_0},\tuple{z_0,z_{n+2}},\tuple{z_{n+2},z_{n+2}}\).
  
  If not, we have recursively constructed an infinite
  set \(\{z_n:n<\omega\}\).
  Since \(X\) is a weakly countably compact space, 
  \(\{z_n:n<\omega\}\) has a limit point \(z_\omega\).
  Since \(\{\tuple{z_n,z_{n+1}}:n<\omega\}\subseteq f\), it follows that
  \(\tuple{z_\omega,z_\omega}\) is a limit point of \(f\).
  Similarly, \(\{\tuple{z_0,z_n}:n<\omega\}\subseteq f\),
  so it follows that \(\tuple{z_0,z_\omega}\)
  is also a limit point of \(f\). Since \(f\) is closed,
  these limit points belong to \(f\).
  Therefore Condition \(\Gamma\) is witnessed by
  \(\tuple{z_0,z_0},\tuple{z_0,z_\omega},\tuple{z_\omega,z_\omega}\),
  showing \((a)\).
\end{proof}

\begin{corollary}Let $X$ be weakly countably compact, and let $f \ne \iota$ be a V-relation on $X$. Then $f$ satisfies Condition $\Gamma$.
\end{corollary}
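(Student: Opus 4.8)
The plan is to observe that this corollary is an immediate specialization of the preceding Theorem once we know that a V-relation other than \(\iota\) is automatically non-trivial. Recall that a V-relation is by definition a USC idempotent surjective relation on \(X^2\), so the hypotheses ``\(X\) weakly countably compact'' and ``\(f\) idempotent and USC'' required by the Theorem are satisfied.

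First I would invoke the Corollary stating that \(\iota\) is the only trivial V-relation: since \(f\neq\iota\) and \(f\) is a V-relation, \(f\) cannot be trivial. (If one prefers to avoid citing that Corollary, the same conclusion follows directly from the Proposition characterizing idempotent surjective relations with \(|f(x)|=1\) everywhere: triviality of the surjective idempotent \(f\) would force \(f=\iota\), contradicting the hypothesis.) Thus condition (c) of the Theorem holds for \(f\).

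Then I would simply apply the implication (c) \(\Rightarrow\) (a) of the Theorem to conclude that \(f\) satisfies Condition \(\Gamma\). There is no real obstacle here; all the work was done in establishing the Theorem and in the earlier analysis of trivial relations, so this corollary is a one-line deduction.

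\begin{proof}
  A V-relation is USC, idempotent, and surjective, so the hypotheses of the preceding Theorem apply with \(f\) idempotent and USC. By the preceding Corollary, \(\iota\) is the only trivial V-relation; since \(f\neq\iota\), it follows that \(f\) is non-trivial, i.e. condition (c) of the Theorem holds. Hence, by that Theorem, \(f\) satisfies Condition \(\Gamma\).
\end{proof}
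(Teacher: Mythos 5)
Your proposal is correct and matches the paper's intended (unwritten) argument exactly: a V-relation is USC, idempotent, and surjective, so non-triviality follows from $f\neq\iota$ via the earlier corollary, and the implication (c)~$\Rightarrow$~(a) of the preceding theorem finishes the job. Nothing further is needed.
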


It is worth noting that the strict linear order \(<\) on \(\mathbb Q\) 
with the discrete topology is an example of a non-trivial idempotent
USC relation on a space that is not weakly countably compact
that does not satisfy Condition \(\Gamma\). Likewise,
the strict lexicographic order \(<\) on the long ray
\(\omega_1\times[0,1]\) with the topology induced by this linear order 
is an example of a non-trivial idempotent
serial non-USC relation on a Hausdorff countably compact 
space that does not satisfy Condition \(\Gamma\).

\section{Applications}

Let \(\alpha=\{\beta:\beta<\alpha\}\) be an ordinal with its usual 
linear order. As noted in \cite{CLONTZVARAGONA}, it is well-known and 
easy to see that \(\alpha+1\) as a totally ordered topological space
is metrizable if and only if \(\alpha\) is countable: if 
\(\alpha\) is uncountable, then the first uncountable ordinal
\(\omega_1\) is
a point of non-first-countability in \(\alpha+1\); if \(\alpha\)
is countable, then \(\alpha+1\) is regular and second-countable.

\begin{theorem}
  Let \(X\) be a \(T_1\) topological space, let
  \(\alpha\) be an uncountable ordinal, and let \(f\) be a relation on $X$ (with $f \subseteq f \circ f$)
  satisfying Condition \(\Gamma\). 
  Then the Mahavier product 
  \(\maProd{X,f,\alpha}\) contains a copy of \(\alpha+1\); 
  therefore, \(\maProd{X,f,\alpha}\) cannot be metrizable.
\end{theorem}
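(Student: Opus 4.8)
The plan is to use Condition $\Gamma$ to manufacture, for every ordinal $\beta \leq \alpha$, a point $\vect{x}_\beta \in \maProd{X,f,\alpha}$ in such a way that the assignment $\beta \mapsto \vect{x}_\beta$ is a homeomorphic embedding of $\alpha+1$ into the Mahavier product. Fix distinct $x,y \in X$ with $\tuple{x,x},\tuple{x,y},\tuple{y,y}\in f$. The intuition is that $y$ will play the role of ``small'' coordinates and $x$ the role of ``large'' coordinates: for $\beta \leq \alpha$, define $\vect{x}_\beta \in \Pi = X^\alpha$ by $\vect{x}_\beta(p) = y$ if $p < \beta$ and $\vect{x}_\beta(p) = x$ if $p \geq \beta$ (so $\vect{x}_0$ is the constant sequence $x$ and $\vect{x}_\alpha$ is the constant sequence $y$). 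First I would check that each $\vect{x}_\beta$ actually lies in $\maProd{X,f,\alpha}$: given $p \leq q$ in $\alpha$, the pair $\tuple{\vect{x}_\beta(p),\vect{x}_\beta(q)}$ is one of $\tuple{y,y}$ (when $q<\beta$), $\tuple{y,x}$ (when $p<\beta\leq q$), or $\tuple{x,x}$ (when $p\geq\beta$); here we must be slightly careful to use $\tuple{x,y}\in f$, i.e. $x\in f(y)$, read as ``the $p$-coordinate $y$ lies in $f$ applied to the $q$-coordinate $x$'' — one should confirm the orientation matches the Mahavier-product convention $\vect{x}(p)\in f_{p,q}(\vect{x}(q))$, and if the orientation is reversed simply swap the roles of $x$ and $y$ throughout (replacing $f$ by $f^{-1}$, which also satisfies Condition $\Gamma$ by symmetry of the hypothesis). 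In all three cases the relevant pair is in $f$, so $\vect{x}_\beta \in \maProd{X,f,\alpha}$. Note that no hypothesis on $f$ beyond Condition $\Gamma$ is needed for this step — in particular $f\subseteq f\circ f$ plays no role here, since we never need the bonding condition for $p<q$ to factor through intermediate indices (the Mahavier product already only demands $\vect{x}(p)\in f_{p,q}(\vect{x}(q))$ directly).

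Next I would show the map $e\colon \alpha+1 \to \maProd{X,f,\alpha}$, $\beta \mapsto \vect{x}_\beta$, is injective and continuous. Injectivity is immediate: if $\beta < \gamma$ then $\vect{x}_\beta(\beta) = x \ne y = \vect{x}_\gamma(\beta)$ (using $\beta < \gamma \leq \alpha$ so that coordinate $\beta$ is a legitimate index in $\alpha$). For continuity, recall the Mahavier product carries the subspace topology from the product $X^\alpha$, so it suffices to show $p_q \circ e$ is continuous for each coordinate projection $p_q$, $q \in \alpha$. But $p_q \circ e$ sends $\beta \mapsto y$ for $\beta > q$ and $\beta \mapsto x$ for $\beta \leq q$; this is a two-valued step function on the ordinal $\alpha+1$, and the preimage of any set is one of $\emptyset$, $\{\beta : \beta \leq q\} = q+1$, $\{\beta : \beta > q\}$, or $\alpha+1$, each of which is open in the order topology on $\alpha+1$ (here is where $T_1$, or really just the fact that singletons like $\{x\},\{y\}$ are closed and the order topology on the ordinal behaves well, comes in — I'd phrase it as: every coordinate map is continuous because $X$ is $T_1$ so the relevant preimages are clopen, but actually one checks $q+1$ and its complement are both open in $\alpha+1$ directly from the order topology, needing no assumption on $X$ at all for continuity of $e$; the $T_1$ hypothesis enters only in the next step).

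The main obstacle — and the crux of the argument — is showing $e$ is a homeomorphism onto its image, i.e. that $e^{-1}$ is continuous, equivalently that $e$ is a closed (or open) map onto $e[\alpha+1]$. Since $\alpha+1$ is compact (as an ordinal successor with the order topology) this would follow if $e[\alpha+1]$ were Hausdorff, which it is as a subspace of the Hausdorff (indeed $T_1$, and we assume $X$ Hausdorff by the standing convention) product $X^\alpha$ — so a continuous bijection from a compact space to a Hausdorff space is automatically a homeomorphism. Thus the real content is just compactness of $\alpha+1$ plus the Hausdorffness of the target, and the argument closes. Finally, since $\maProd{X,f,\alpha}$ contains a copy of $\alpha+1$ and $\alpha$ is uncountable, the remarks preceding the theorem show $\alpha+1$ is not metrizable (it fails first-countability at $\omega_1 \leq \alpha$); as metrizability is hereditary, $\maProd{X,f,\alpha}$ cannot be metrizable either, completing the proof. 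I would double-check that the standing ``all spaces Hausdorff'' convention is in force here (it is stated in Section 2, ``except when otherwise stated''), since that is what makes the compact-to-Hausdorff trick legitimate; if one wanted to work with merely $T_1$ target one would instead verify directly that $e$ maps the basic open sets $(\gamma,\delta]$ of $\alpha+1$ to relatively open subsets of the image, which is a routine but slightly tedious case analysis.
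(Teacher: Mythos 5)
Your construction is exactly the paper's: the same points $\vect{x}_\gamma$ (constant $y$ below $\gamma$, constant $x$ at and above $\gamma$), the same verification via the three pairs supplied by Condition $\Gamma$, and the same conclusion from non-metrizability of $\alpha+1$. The only divergence is how you certify that the embedding is a homeomorphism onto its image: the paper compares subbases directly, showing that each subbasic open set of $\alpha+1$ corresponds to a relatively open set of the form $A\cap\prod U_\gamma$ with $U_\beta\in\{X\setminus\{x\},X\setminus\{y\}\}$, which needs only $T_1$; you instead invoke compactness of $\alpha+1$ together with Hausdorffness of the target. As written, your primary route leans on the standing ``all spaces Hausdorff'' convention, which sits awkwardly with the theorem's explicit $T_1$ hypothesis (presumably stated precisely so that Hausdorffness is not assumed). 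But this is easily repaired without retreating to your ``tedious case analysis'' fallback: the image lies inside $\{x,y\}^\alpha$, and since $X$ is $T_1$ the two-point subspace $\{x,y\}$ is discrete, so $\{x,y\}^\alpha$ is Hausdorff (indeed a copy of $2^\alpha$) and the compact-to-Hausdorff argument goes through under $T_1$ alone. With that one sentence added, your proof is complete and arguably cleaner than the paper's subbasis bookkeeping; your observation that $f\subseteq f\circ f$ is needed only for $\maProd{X,f,\alpha}$ to be a legitimate Mahavier product, not for the membership check, is also correct. (One small slip: you write ``$\tuple{x,y}\in f$, i.e.\ $x\in f(y)$''; under the paper's convention $\tuple{x,y}\in f$ means $y\in f(x)$, which is in fact the orientation the verification needs, so no swap of $x$ and $y$ is required.)
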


\begin{proof}
  Let \(\tuple{x,x},\tuple{x,y},\tuple{y,y}\in f\)
  for distinct \(x,y\in X\). For \(\gamma\leq \alpha\),
  define \(\vect{x}_\gamma\in\maProd{X,f,\alpha}\) by
  \(\vect{x}_\gamma(\beta)=y\) for \(\beta<\gamma\)
  and \(\vect{x}_\gamma(\beta)=x\) for \(\gamma\leq\beta<\alpha\).
  That is, \(\vect{x}_\gamma\) is defined such that \(\gamma\) is
  the least ordinal such that \(\vect{x}_\gamma(\gamma)=x\).

  We will show that the map \(\gamma\mapsto\vect{x}_\gamma\) is
  a homeomorphism from \(\alpha+1\) to
  \(A=\{\vect{x}_\gamma:\gamma\leq\alpha\}\subseteq\maProd{X,f,\alpha}\). 
  This may be accomplished by comparing subbases: for \(\beta\leq\alpha\),
  the subbasic open set \([0,\beta)\subseteq\alpha+1\) maps to
  the set \(\{\vect{x}\in A:\vect{x}(\beta)=x\}\subseteq A\),
  which equals the open set \(A\cap\prod_{\gamma<\alpha}U_\gamma\) where
  \(U_\beta=X\setminus\{y\}\) and \(U_\gamma=X\) otherwise.
  Similarly, the subbasic open set \((\beta,\alpha]\subseteq\alpha+1\) 
  maps to the set \(\{\vect{x}\in A:\vect{x}(\beta)=y\}\subseteq A\), 
  which equals the open set \(A\cap\prod_{\gamma<\alpha}U_\gamma\) where
  \(U_{\beta}=X\setminus\{x\}\) and \(U_\gamma=X\) otherwise.
  And since \(A\cap\prod_{\gamma<\alpha}U_\gamma\) describes every
  subbasic open subset of \(A\) for 
  \(U_\beta\in\{\emptyset,X\setminus\{x\},X\setminus\{y\},X\}\)
  and \(U_\gamma=X\) otherwise,
  it follows that this map is indeed a homeomorphism.
\end{proof}

\begin{corollary}
  Let \(X\) be any weakly countably compact metrizable space, let
  \(\alpha\) be an ordinal, and let \(f\not=\iota\) be a 
  V-relation on $X$. Then the exact Mahavier product 
  \(\maProd{X,f,\alpha}\) contains a copy of \(\alpha+1\); 
  therefore, \(\maProd{X,f,\alpha}\) is metrizable
  if and only if \(\alpha\) is countable.
\end{corollary}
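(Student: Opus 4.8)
The plan is to assemble the statement from the results already in hand, with essentially no new work. First I would record that a metrizable space is in particular \(T_1\), and by hypothesis \(X\) is weakly countably compact; since \(f\neq\iota\) is a V-relation on \(X\), the corollary immediately preceding (which reads off the Condition \(\Gamma\) theorem) guarantees that \(f\) satisfies Condition \(\Gamma\). I would also note that, being a V-relation, \(f\) is idempotent, so \(f=f^2=f\circ f\); in particular \(f\subseteq f\circ f\), so the hypothesis of the Mahavier-product theorem is met, and \(\maProd{X,f,\alpha}\) is genuinely an exact Mahavier product in the sense of the relevant definition.

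Next I would observe that the construction in the proof of the theorem asserting that \(\maProd{X,f,\alpha}\) contains a copy of \(\alpha+1\) — sending \(\gamma\leq\alpha\) to the point \(\vect{x}_\gamma\) that equals \(y\) below \(\gamma\) and equals \(x\) from \(\gamma\) onward, and comparing subbases — never uses that \(\alpha\) is uncountable; only the closing remark deducing non-metrizability did. Hence for an arbitrary ordinal \(\alpha\) the same argument shows that \(\gamma\mapsto\vect{x}_\gamma\) is a homeomorphism of \(\alpha+1\) onto a subspace of \(\maProd{X,f,\alpha}\), which gives the first assertion of the corollary for every \(\alpha\).

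It then remains to settle the metrizability dichotomy. If \(\alpha\) is countable, then \(\maProd{X,f,\alpha}\) is a subspace of the countable product \(\prod_{\beta<\alpha}X\) of metrizable spaces, hence metrizable (as noted in the abstract). If \(\alpha\) is uncountable, then \(\maProd{X,f,\alpha}\) contains the copy of \(\alpha+1\) just produced; since \(\alpha+1\), viewed as a linearly ordered topological space with \(\alpha\) uncountable, is not metrizable — \(\omega_1\) being a point of non-first-countability, as recalled at the beginning of this section — and metrizability is hereditary, \(\maProd{X,f,\alpha}\) cannot be metrizable. Combining the two cases yields the stated equivalence.

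I do not anticipate any real obstacle here; the corollary is bookkeeping that chains together the Condition \(\Gamma\) corollary for V-relations, the Mahavier-product copy-of-\(\alpha+1\) theorem, and standard facts about metrizability (countable products and subspaces of metrizable spaces are metrizable, while long ordinals are not). The only points needing a moment's attention are (i) checking that a V-relation supplies both the ``exact'' structure and the hypothesis \(f\subseteq f\circ f\), which are immediate from idempotence, and (ii) noticing that the proof of the underlying theorem in fact works verbatim for countable \(\alpha\), so the ``contains a copy of \(\alpha+1\)'' clause holds in full generality rather than only in the uncountable case.
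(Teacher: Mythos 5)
Your proposal is correct and matches the intended argument: the paper states this corollary without proof precisely because it is the assembly you describe (Condition \(\Gamma\) from the preceding corollary, the copy-of-\(\alpha+1\) construction, and the standard metrizability facts). Your observation that the embedding construction in the theorem never uses uncountability of \(\alpha\), and your check that idempotence supplies both exactness and \(f\subseteq f\circ f\), are exactly the points that make the reduction legitimate.
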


As noted in \cite{CLONTZVARAGONA}, a bit more can be said.

\begin{definition}
  The \term{\(\Sigma\)-product of reals \(\Sigma\mathbb{R}^\kappa\)}
  for a cardinal \(\kappa\) is given by
  \[\{
    \vect x\in \mathbb{R}^\kappa
  :
    |\{\alpha<\kappa:\vect{x}(\alpha)\not=0\}|\leq\aleph_0
  \}\]
\end{definition}

Note \(\Sigma\mathbb{R}^\omega=\mathbb{R}^\omega\), and
since every compact metrizable space embeds in \([0,1]^\omega\),
it follows that every compact metrizable space embeds in
a \(\Sigma\)-product of reals.

Compact subspaces of \(\Sigma\mathbb{R}^\kappa\) are known
as Corson compacts: see e.g. \cite{Corson} for
an investigation into the applications of Corson compacts in functional
analysis.

\begin{corollary}
  Let \(X\) be a \(T_1\) topological space, let
  \(\alpha\) be an uncountable ordinal, and let \(f\) be a relation on $X$ (with $f \subseteq f \circ f$)
  satisfying Condition \(\Gamma\). 
  Then the Mahavier product 
  \(\maProd{X,f,\alpha}\) cannot be embedded in a \(\Sigma\)-product
  of reals.
\end{corollary}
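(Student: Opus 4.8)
The plan is to leverage the previous theorem — which already produces a copy of \(\alpha+1\) inside \(\maProd{X,f,\alpha}\) — together with the well-known fact that \(\Sigma\)-products of reals are \emph{countably tight}: if \(p\) lies in the closure of a set \(A\subseteq\Sigma\mathbb{R}^\kappa\), then \(p\) lies in the closure of some countable \(B\subseteq A\). Since countable tightness is inherited by subspaces, it suffices to locate, inside \(\maProd{X,f,\alpha}\), a subspace that is not countably tight; the natural candidate is a copy of \(\omega_1+1\).

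First I would observe that, as \(\alpha\) is uncountable, \(\omega_1\leq\alpha\), so the interval \([0,\omega_1]=\{\beta:\beta\leq\omega_1\}\) is a closed convex subset of the ordinal \(\alpha+1\); a convex subset of a linearly ordered space carries its order topology as the subspace topology, so \(\alpha+1\) contains a homeomorphic copy of the ordinal space \(\omega_1+1\). Invoking the previous theorem, \(\maProd{X,f,\alpha}\) contains a copy of \(\alpha+1\), and hence a copy of \(\omega_1+1\).

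Next I would check that \(\omega_1+1\) fails countable tightness: the point \(\omega_1\) lies in the closure of \([0,\omega_1)\), yet every countable \(C\subseteq[0,\omega_1)\) is bounded by some \(\beta<\omega_1\) (by regularity of \(\omega_1\)), so its closure lies in \([0,\beta]\) and omits \(\omega_1\). Combining the previous two paragraphs, \(\maProd{X,f,\alpha}\) contains a subspace that is not countably tight, so it cannot embed in any \(\Sigma\mathbb{R}^\kappa\); that is, it cannot be embedded in a \(\Sigma\)-product of reals.

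The only nonroutine ingredient is the countable tightness of \(\Sigma\mathbb{R}^\kappa\), which I would expect to be the main obstacle if a self-contained argument is preferred over a citation. In that case one builds the countable set \(B\) and a countable set of coordinates \(C\supseteq\operatorname{supp}(p)\) by a simultaneous recursion so that \(B\) meets every basic neighborhood of \(p\) determined by finitely many coordinates from \(C\), and then uses that any member of \(\Sigma\mathbb{R}^\kappa\) whose support lies in \(C\) agrees with \(p\) on all coordinates outside \(C\), so that such a point lies in a given basic neighborhood of \(p\) precisely when it does so on the finitely many relevant coordinates of \(C\). Everything else is bookkeeping about ordinal topologies together with an appeal to the embedding theorem already established.
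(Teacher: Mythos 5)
Your argument takes exactly the paper's route: invoke the preceding theorem to obtain a copy of \(\alpha+1\) inside the Mahavier product, then observe that \(\alpha+1\) cannot embed in a \(\Sigma\)-product of reals. The paper states that last fact without proof, whereas you correctly supply the standard justification (a copy of \(\omega_1+1\) sits inside \(\alpha+1\) as a convex subset, \(\omega_1+1\) fails countable tightness at \(\omega_1\) since countable subsets of \(\omega_1\) are bounded, and \(\Sigma\mathbb{R}^\kappa\) is countably tight, a property inherited by subspaces); this is correct and essentially the same proof.
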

\begin{proof}
The Mahavier product \(\maProd{X,f,\alpha}\) contains a copy of \(\alpha+1\),
  which cannot be embedded into a \(\Sigma\)-product of reals.
\end{proof}

\section{Acknowledgements}

The authors wish to thank Jonathan Meddaugh, Christopher Night,
and the participants in the Spring 2016 Carolina
Topology Seminar for their comments on various working versions
of these results.

\end{document}